\theoremstyle{definition}
\newtheorem{defin}{Definition}
\theoremstyle{plain}
\newtheorem{theo}[defin]{Theorem}
\renewenvironment{abstract}
{\par\noindent\textbf{\abstractname.}\ \ignorespaces}
{\par\medskip}
\title[The $L^2$-completion of the space of Riemannian metrics is CAT$(0)$]{The $L^2$-completion of the space of Riemannian metrics is CAT$(0)$: a shorter proof}
\author{Nicola Cavallucci}
\address{Nicola Cavallucci, Karlsruhe Institute of Technology, Engelstrasse 2, D-76128 Karlsruhe}
\email{n.cavallucci23@gmail.com}
\date{}
\begin{document}
\maketitle
\begin{abstract}
	\footnotesize
	We reprove in an easier way a result of Brian Clarke: the completion of the space of Riemannian metrics of a compact, orientable smooth manifold with respect to the $L^2$-distance is CAT$(0)$. In particular we show that this completion is isometric to the space of $L^2$-maps from a standard probability space to a fixed CAT$(0)$ space.
\end{abstract}

Ebin (\cite{Eb70}) introduced the so-called $L^2$-metric on the space of all Riemannian metrics on a given compact, orientable manifold. This metric has been extensively studied: its geodesics and curvature has been computed explicitely by Freed and Groisser (\cite{FG89}) and the properties of the exponential map has been studied by Gil-Merano and Michor (\cite{GM91}).
Recently Clarke (\cite{Cla13}) showed that the completion of the $L^2$-metric is a CAT$(0)$ space. In Clarke's proof the thinness of geodesic triangles is verified by direct computations. The scope of this paper is to provide a shorter and more conceptual proof of the same result. As a by-product of our proof we deduce the
\begin{theo}
	\label{theorem-tutti-uguali}
	The completion of the space of Riemannian metrics on a compact, orientable $n$-dimensional, smooth manifold $M$ with respect to the $L^2$-metric depends only on the dimension of $M$.
\end{theo}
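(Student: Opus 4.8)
The plan is to deduce Theorem~\ref{theorem-tutti-uguali} directly from the isometric description of the completion announced in the abstract: the $L^2$-completion $\overline{\mathcal{M}}_M$ of the space of metrics on $M$ is isometric to the space $L^2(\Omega, X)$ of $L^2$-maps from a standard probability space $\Omega$ into a fixed CAT$(0)$ space $X$. Recall that, for a CAT$(0)$ space $X$ with basepoint $x_0$, this is the set of (classes of) measurable maps $f\colon \Omega \to X$ with $\int_\Omega d_X(f(\omega), x_0)^2\, d\mu(\omega) < \infty$, endowed with the distance $d(f,g)^2 = \int_\Omega d_X(f(\omega), g(\omega))^2\, d\mu(\omega)$. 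The crucial observation is that this construction sees $M$ only through two pieces of data: the target $X$ and the isomorphism class, as a measure space, of the source $\Omega$. I would show that both depend on $M$ only through its dimension.

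First I would record that the target is dimension-determined. In the pointwise analysis underlying the isometry, the fibre over a point of $M$ is the space of positive-definite symmetric bilinear forms on the $n$-dimensional tangent space; after a choice of basis this is the space $P(n)$ of positive-definite symmetric $n\times n$ matrices, equipped with the Freed--Groisser metric induced by the $L^2$-integrand (and $X$ is its metric completion). This object carries no memory of $M$ beyond $n$, so $X$ is literally the same CAT$(0)$ space for every compact orientable $n$-manifold.

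Next I would pin down the source. Fixing any smooth reference volume on $M$ and normalising it to total mass $1$ realises $\Omega$ as $(M, \mu)$ with $\mu$ a smooth probability measure. For $n \ge 1$ this is a non-atomic standard probability space: the underlying space is Polish (compact metrisable) and a smooth volume assigns measure zero to every point. The classical input is then the isomorphism theorem for standard probability spaces: any two non-atomic ones are isomorphic mod $0$, each being isomorphic to $([0,1], \lambda)$ with $\lambda$ Lebesgue measure. A measure isomorphism $\phi\colon \Omega' \to \Omega$ induces, by precomposition $f \mapsto f\circ\phi$, a bijection $L^2(\Omega, X) \to L^2(\Omega', X)$ preserving both integrals that define the distance, hence an isometry. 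Chaining these, for any two compact orientable $n$-manifolds $M, M'$ one obtains
\[
\overline{\mathcal{M}}_M \;\cong\; L^2((M,\mu), X) \;\cong\; L^2(([0,1],\lambda), X) \;\cong\; L^2((M',\mu'), X) \;\cong\; \overline{\mathcal{M}}_{M'},
\]
which is the assertion.

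The step I expect to require the most care is not the soft measure theory above but the prior point it rests on: that the isometry $\overline{\mathcal{M}}_M \cong L^2(\Omega, X)$ can be set up with respect to a single \emph{fixed} reference measure, rather than the metric-dependent Riemannian volume $dV_g$, so that $\Omega$ is one fixed object whose isomorphism class one may then standardise. This forces the volume factor to be absorbed into the pointwise metric on $X$. Granting this (it is exactly the content of the isometry established in the body), the conclusion is insensitive to the two choices made—the reference measure on $M$ and the measure isomorphism to $[0,1]$—precisely because all non-atomic standard probability spaces are isomorphic.
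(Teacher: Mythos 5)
Your proposal is correct and follows essentially the same route as the paper: the identification of the completion with $L^2(M,\bar{P}(n))$ (Theorem~\ref{theo-isometry}, with the volume factor absorbed into the fibre metric \eqref{eq-metric} so the reference measure $\mu_{g_0}$ is fixed), followed by the isomorphism of the non-atomic standard probability space $(M,\mathcal{B},\mu_{g_0})$ with $([0,1],\lambda)$ and precomposition to get the isometry with $L^2([0,1],\bar{P}(n))$. Your explicit remarks on non-atomicity and on why the fixed reference measure matters are slightly more detailed than the paper's one-line proof, but the argument is the same.
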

In theorem \ref{theo-isometry} we will describe explicitly this completion as the space of $L^2$ maps from $M$ to a fixed CAT$(0)$ metric space: that is why it does not depend on $M$. The CAT$(0)$ property of the target will be detected as a special case of the warped product theorem of \cite{AB04}. It is exactly this identification with a space of $L^2$ maps with target a CAT$(0)$ space that proves the CAT$(0)$ property of the completion. 

\vspace{2mm}

\noindent Let $M$ be a orientable, compact, $n$-dimensional, smooth manifold $M$. We equip $M$ with a smooth Riemannian metric $g_0$ of total volume $1$, with corresponding volume form $\mu_{g_0}$. We denote by $S^2T_p^*M$ the vector space of $(0,2)$-symmetric tensors at $p \in M$ and by $S^2_+T_p^*M$ the subspace of positive-definite ones. 
The disjoint union of the spaces $S^2_+T_p^*M$ defines naturally a fiber bundle $E$ over $M$ with fiber $P(n)$: the space of $n\times n$ real positive definite symmetric matrices. By definition a global smooth section of $E$ is a Riemannian metric on $M$. The set of global smooth sections will be denoted by $\mathcal{E}_{C^\infty}$. On $S^2_+T_p^*M$ we consider the following Riemannian metric. For $h\in S^2_+T_p^*M$ and $a,b \in T_h(S^2_+T_p^*M) \cong S^2T_p^*M$ we set
\begin{equation}
	\label{eq-metric}
	\langle a,b \rangle_{h,p} = \text{tr}(h^{-1}ah^{-1}b)\cdot \sqrt{\det (g_0(p)^{-1} a)}.
\end{equation}
We denote by $d_p$ the metric induced by this Riemannian structure on $S^2_+T_p^*M$. 
Let $\mathcal{E}_m$ be the space of all measurable global sections $\sigma \colon M \to E$, where $M$ and $E$ are equipped with their Borel $\sigma$-algebras. A section $\sigma \in \mathcal{E}_m$ is said to be square integrable if 
$$\int_M d_p^2(\sigma(p), g_0(p))d\mu_{g_0}(p) < \infty.$$
We remark that the integral above is meaningful because the distances $d_p$ vary continuously with $p$, and so the integrand function is measurable. Moreover since $M$ is compact the definition above does not depend on the choice of $g_0$.
We denote the space of square integrable sections by $\mathcal{E}_{\mathcal{L}^2}$. On $\mathcal{E}_{\mathcal{L}^2}$ we define the equivalence relation $\sigma \sim \sigma'$ if and only if $\mu_{g_0}(\lbrace p\in M \text{ s.t. } \sigma(p) \neq \sigma'(p)\rbrace) = 0$. The quotient space is denoted by $\mathcal{E}_{L^2}$. For $\sigma,\sigma' \in \mathcal{E}_{\mathcal{L}^2}$ we set
$$d_{L^2}(\sigma,\sigma') = \left(\int_M d_p^2(\sigma(p), \sigma'(p))d\mu_{g_0}(p)\right)^{\frac12}.$$
An application of the triangle inequality for $d_p$ and the classical inequalities for maps from $M$ to $\mathbb{R}$ shows that $d_{L^2}$ is a finite pseudodistance on $\mathcal{E}_{\mathcal{L}^2}$. Moreover $d_{L^2}(\sigma,\sigma') = 0$ if and only if $\sigma \sim \sigma'$, so $d_{L^2}$ defines a distance on $\mathcal{E}_{L^2}$. Of course every continuous section $\sigma\in \mathcal{E}_{C^0}$ is square integrable. Moreover if we restrict the distance $d_{L^2}$ to the space of smooth sections $\mathcal{E}_{C^\infty}$ we obtain the classical $L^2$-distance (\cite{Cla13}, Theorem 3.8). 
The metric space $(\mathcal{E}_{C^\infty},d_{L^2})$ is not complete because the fibers $(S^2_+T_p^*M, d_p)$ are not complete and because $C^\infty$-sections are not complete in the $L^2$-sense. Its completion has been described in \cite{Cla13comp}, and here we are going to present it in a very natural way.\\
Observe that if $(U,\varphi)$ is a chart of $M$ sending the volume form $\mu_{g_0}$ to the standard volume form of $\mathbb{R}^n$ then the push-forward of the scalar product \eqref{eq-metric} at $p$ on $P(n)$ reads as
\begin{equation}
	\label{eq-metric-P(n)}
	\langle A,B \rangle_{H} = \text{tr}(H^{-1}AH^{-1}B)\cdot \sqrt{\det A},
\end{equation}
for $H\in P(n)$ and $A,B\in T_HP(n)$. The distance metric induced on $P(n)$ by this Riemannian metric is denoted by $d_{P(n)}$. Let $(\bar{P}(n), d_{P(n)})$ be the completion of the metric space $(P(n), d_{P(n)})$. We cover $M$ by charts sending $\mu_{g_0}$ to the Euclidean volume form. The change of coordinate maps between these charts act naturally as isometries of $(P(n), d_{P(n)})$. These isometries can be extended to isometries of $(\bar{P}(n), d_{P(n)})$ and they naturally define a new fiber bundle on $M$ modeled on $\bar{P}(n)$. We denote this fiber bundle by $\bar{E}$. Each fiber $\bar{E}_p$ is naturally equipped with the metric $d_p$ induced by the metric $d_{P(n)}$ on $\bar{P}(n)$. Moreover the fiber bundle $E$ has a natural inclusion map of fiber bundles in $\bar{E}$ and the restriction of this map to each fiber $(E_p,d_p)$ is an isometric embedding in $(\bar{E}_p,d_p)$ with dense image. In other words $(\bar{E}_p,d_p)$ is the completion of $(E_p,d_p)$. We define as above the space $\bar{\mathcal{E}}_{L^2}$, with the corresponding distance $d_{L^2}$. We have the obvious embeddings 
$$\mathcal{E}_{C^\infty} \subset \mathcal{E}_{C^0} \subset \mathcal{E}_{L^2} \subset \bar{\mathcal{E}}_{L^2}.$$
The following result describes the completion of the space of Riemannian metrics with respect to $d_{L^2}$.
\begin{theo}
	\label{theo-inclusions}
	The set $\mathcal{E}_{C^\infty}$ is dense in $\bar{\mathcal{E}}_{L^2}$ with respect to $d_{L^2}$. 
\end{theo}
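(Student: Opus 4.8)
The plan is to reach an arbitrary $\sigma \in \bar{\mathcal{E}}_{L^2}$ from $\mathcal{E}_{C^\infty}$ in two conceptually separate movements: first a purely metric approximation of $\sigma$ by a measurable section of the \emph{original} bundle $E$ that is piecewise equal to finitely many fixed smooth sections, and then a purely linear smoothing of the measurable gluing. The point I would exploit is that each fiber $E_p \cong P(n)$ is a \emph{convex} subset of the vector space $S^2T_p^*M$, so that convex combinations of smooth metrics are again smooth metrics; this lets me avoid any Lusin-type extension or mollification near the incomplete boundary of the fibers.

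First I would fix, once and for all, a countable family $\{\tau_k\}_{k\in\mathbb{N}} \subseteq \mathcal{E}_{C^\infty}$ that is dense in the $C^0$-topology. Since $M$ is compact the evaluation maps $\mathcal{E}_{C^\infty} \to E_p$ are onto (any $v \in E_p$ is the value at $p$ of some smooth metric), so $C^0$-density forces $\{\tau_k(p)\}_k$ to be dense in $E_p$, hence in $\bar E_p$, for every $p$. Next, given $\sigma$ and using that $p \mapsto d_p(\tau_k(p),\sigma(p))$ is measurable, for each $N$ I would let $A_k^N$ be the measurable set where $\tau_k$ achieves $\min_{j\le N} d_p(\tau_j(p),\sigma(p))$ (breaking ties by least index), and set $\sigma_N := \tau_k$ on $A_k^N$. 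Then $\sigma_N \in \mathcal{E}_{L^2}$ is $E$-valued and $d_p(\sigma_N(p),\sigma(p)) = \min_{j\le N} d_p(\tau_j(p),\sigma(p))$ decreases to $0$ almost everywhere by fiberwise density, dominated by the $L^2$ function $d_p(\tau_1(p),\sigma(p))$. Dominated convergence gives $d_{L^2}(\sigma,\sigma_N)\to 0$, and I would fix $N$ with $d_{L^2}(\sigma,\sigma_N) < \varepsilon/2$.

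For the second movement I would write $M = \bigsqcup_{k=1}^N A_k$ (the nonempty $A_k^N$), choose by inner regularity disjoint compacta $C_k \subseteq A_k$ with $\mu_{g_0}(M\setminus\bigcup_k C_k) < \delta$, and take a smooth partition of unity $\{\rho_k\}$ with $\rho_k\equiv 1$ on $C_k$ and $\rho_k\equiv 0$ on $C_j$ for $j\neq k$. Then $\sigma_\infty := \sum_k \rho_k\tau_k$ lies in $\mathcal{E}_{C^\infty}$ because positive-definiteness is preserved under convex combinations ($\rho_k\ge 0$, $\sum_k\rho_k = 1$). On $\bigcup_k C_k$ one has $\sigma_\infty = \sigma_N$, while everywhere both sections take values in the compact set $\mathcal{C} := \{\sum_k \lambda_k\tau_k(p) : p\in M,\ \lambda_k\ge 0,\ \sum_k\lambda_k = 1\} \subseteq E$, on which the continuity of $p\mapsto d_p$ furnishes a uniform bound $D<\infty$ for the fiberwise distance. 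Hence $d_{L^2}(\sigma_N,\sigma_\infty)^2 \le D^2\delta < (\varepsilon/2)^2$ for $\delta$ small, and the triangle inequality yields $d_{L^2}(\sigma,\sigma_\infty) < \varepsilon$.

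The main obstacle I anticipate is conceptual rather than computational: recognizing that the two relevant notions of convexity decouple. The reduction to finitely many smooth pieces is genuinely metric (nearest point in the incomplete metric $d_p$, with domination supplied by integrability), whereas the final smoothing is affine and uses only the elementary convexity of $P(n)$ inside the symmetric tensors — never the geodesic structure of $d_p$ nor any control of the metric as one approaches the degenerate boundary of the fibers. Verifying measurability of the argmin selection and the finiteness of $D$ on $\mathcal{C}$ are the routine technical points to be checked along the way.
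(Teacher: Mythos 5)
Your proposal is correct, but it takes a genuinely different route from the paper's. The paper descends the chain $\bar{\mathcal{E}}_{L^2} \supset \mathcal{E}_{L^2} \supset \mathcal{E}_{C^0} \supset \mathcal{E}_{C^\infty}$ in three steps: a pointwise $\varepsilon$-selection into $P(n)$ to pass from $\bar{E}$-valued to $E$-valued sections, then Lusin's theorem combined with Dugundji's extension theorem (whose hypothesis is exactly the convexity of $P(n)$ inside $S(n)$) to reach continuous sections, then Nash's embedding theorem plus convolution and tubular-neighbourhood projections to smooth them. You collapse this into two steps: a measurable argmin selection over a countable $C^0$-dense family $\lbrace \tau_k \rbrace \subseteq \mathcal{E}_{C^\infty}$, which by dominated convergence produces sections piecewise equal to finitely many smooth metrics --- this simultaneously performs the paper's first step and, incidentally, makes explicit the measurable-selection point that the paper glosses in the phrase ``we can choose a point $\sigma_i^\varepsilon(p)$'' --- followed by a partition-of-unity gluing that exploits the same fiberwise convexity the paper routes through Dugundji's theorem, but in the elementary form that convex combinations of positive-definite tensors are positive-definite. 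Your route avoids Lusin, Dugundji and above all Nash's theorem, at the modest cost of three routine verifications you largely flag yourself: separability of $\mathcal{E}_{C^\infty}$ in the $C^0$-topology (standard for compact $M$, e.g.\ by embedding $S^2T^*M$ into a trivial bundle); the fact that $d_p$-density of $\lbrace \tau_k(p)\rbrace$ in $E_p$ follows from Euclidean density, because $d_p$ is a Riemannian distance on the open cone $S^2_+T_p^*M$ and hence induces the standard topology there; and the existence of the partition of unity with $\rho_k \equiv 1$ on $C_k$ and $\rho_k \equiv 0$ on $C_j$, $j \neq k$, which is standard since the $C_k$ are disjoint compacta. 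For the uniform bound $D$ you can even avoid invoking continuity of the fibered distance: for $a,b \in \mathcal{C}_p$ bound $d_p(a,b)$ by the Riemannian length of the Euclidean segment from $a$ to $b$, which stays in the convex compact set $\mathcal{C}$, on which the metric tensor \eqref{eq-metric} is continuous and hence bounded. Note finally that $D$ depends on $N$, but since you fix $N$ before choosing $\delta$ the quantifiers are in the right order, and your argument goes through.
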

The proof is standard, we just sketch it leaving the details to the reader.
\begin{proof}
	We fix a triangulation of $M$ and we denote by $V_1,\ldots,V_k$ the interior of the maximal dimensional simplices. We can suppose that each $V_i$ is the support of a trivializing chart for the bundles $\bar{E}$ and $E$ sending $\mu_{g_0}$ to the Euclidean volume form. In order to define a $L^2$-section it is enough to define it on the sets $V_i$ since the complementary of their union has null measure in $M$. A $\bar{E}$-section (resp. $E$-section) on the set $V_i$ can be seen as a map from $V_i$ to $\bar{P}(n)$ (resp. $P(n)$). Let $\sigma \in \bar{\mathcal{E}}_{L^2}$ and $\varepsilon > 0$. The restriction $\sigma_i$ of $\sigma$ to $V_i$ can be seen as a map from $V_i$ to $\bar{P}(n)$. For every $p\in V_i$ we can choose a point $\sigma_i^\varepsilon(p)$ of $P(n)$ such that $d_{P(n)}(\sigma_i^\varepsilon(p), \sigma_i(p)) < \varepsilon$. Gluing together the $\sigma_i^\varepsilon$ we define a section $\sigma^\varepsilon$ of the fiber bundle $E$ such that $\sigma^\varepsilon \in \mathcal{E}_{L^2}$ and $d_{L^2}(\sigma, \sigma^\varepsilon) < \varepsilon \cdot \sqrt{\mu_{g_0}(M)}$. This shows that $\mathcal{E}_{L^2}$ is dense in $\bar{\mathcal{E}}_{L^2}$.\\
	Now we pass to continuous sections. We fix $\sigma \in \mathcal{E}_{L^2}$ and $\varepsilon > 0$.	By Lusin's Theorem we find a subset $W_{i,\sigma}^\varepsilon$ of $V_i$ such that $\mu_{g_0}(V_i\setminus W_{i,\sigma}^\varepsilon) <\varepsilon$ and $\sigma_i$ restricted to $W_{i,\sigma}^\varepsilon$ is continuous. Since $P(n)$ is a convex subset of the vector space $S(n)$ of all $n\times n$ symmetric matrices we can apply Theorem 4.1 of \cite{Dug51} to find a continuous map $\sigma_i^\varepsilon$ defined on $V_i$ such that its restriction to $W_{i,\sigma}^\varepsilon$ coincides with $\sigma_i$ and with image contained in the convex hull of the image of $\sigma_i$, in particular it is contained in $P(n)$.
	In order to glue the pieces together we fix a partition of unity $\lbrace \rho_i \rbrace_{i=1,\ldots,k}$ such that $\rho_i \equiv 1$ on $W_{i,\sigma}^\varepsilon$ and $\rho_i\equiv 0$ outside $V_i$. We set $\sigma^\varepsilon(p) = \sum_{i=1}^k \rho_i(p)\cdot \sigma_i^\varepsilon(p)$. By construction $\sigma^\varepsilon \in \mathcal{E}_{C^0}$ and $d_{L^2}(\sigma^\varepsilon, \sigma) \to 0$ when $\varepsilon$ goes to $0$.\\
	Finally we check that smooth sections are dense in the continuous ones. Use Nash's Theorem to find a Riemannian isometric embedding of $(P(n), d_{P(n)})$ into some Euclidean space. The continuous sections $\sigma_i^\varepsilon$ found above can be approximated by using standard convolution techniques and tubular neighbourhood projections. This, together with partitions of unity as above, allow us to define global smooth sections approximating continuous ones.\\
\end{proof} 
Arguing as at the beginning of the previous proof and using the fact that $L^2$-sections are defined almost everywhere we can simplify the description of the space $\bar{\mathcal{E}}_{L^2}$ as follows.
\begin{theo}
	\label{theo-isometry}
	The metric space $(\bar{\mathcal{E}}_{L^2}, d_{L^2})$ is isometric to $L^2(M, \bar{P}(n))$.
\end{theo}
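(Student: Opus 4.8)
The plan is to build the isometry by choosing, over a subset of full measure in $M$, a measurable trivialization of the bundle $\bar{E}$, and then to observe that the twisting of the fibers becomes invisible to $L^2$-sections. First I would recall that $L^2(M,\bar{P}(n))$ denotes the space of equivalence classes, up to $\mu_{g_0}$-null sets, of measurable maps $f\colon M \to \bar{P}(n)$ for which $\int_M d_{P(n)}^2(f(p), *)\, d\mu_{g_0}(p) < \infty$ for a fixed basepoint $* \in \bar{P}(n)$; since $\mu_{g_0}(M) = 1$, both this condition and the distance $d_{L^2}(f,f') = \left(\int_M d_{P(n)}^2(f(p),f'(p))\, d\mu_{g_0}(p)\right)^{1/2}$ are independent of the choice of basepoint.

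Arguing as at the beginning of the proof of Theorem \ref{theo-inclusions}, I would fix a triangulation of $M$ and let $V_1, \ldots, V_k$ be the interiors of the top-dimensional simplices, so that the $V_i$ are pairwise disjoint and their union has full $\mu_{g_0}$-measure. On each $V_i$ I choose a trivializing chart of $\bar{E}$ sending $\mu_{g_0}$ to the Euclidean volume form; since in such a chart the fiber metric is given by \eqref{eq-metric-P(n)}, which does not depend on $p$, this yields for every $p \in V_i$ an isometry $\psi_{i,p}\colon (\bar{E}_p, d_p) \to (\bar{P}(n), d_{P(n)})$ depending measurably on $p$. I would then define $\Phi\colon \bar{\mathcal{E}}_{L^2} \to L^2(M, \bar{P}(n))$ by $\Phi(\sigma)(p) = \psi_{i,p}(\sigma(p))$ for $p \in V_i$. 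Because the $V_i$ are disjoint and cover $M$ up to a null set, this prescription determines $\Phi(\sigma)$ almost everywhere, so no compatibility condition on overlaps is needed, and the inverse is given fiberwise by $\sigma(p) = \psi_{i,p}^{-1}(f(p))$ on each $V_i$.

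It then remains to check that $\Phi$ is well-defined on equivalence classes, lands in the correct $L^2$-space, and is an isometry. Since each $\psi_{i,p}$ is a fiberwise isometry, one has $d_p(\sigma(p),\sigma'(p)) = d_{P(n)}(\Phi(\sigma)(p), \Phi(\sigma')(p))$ for almost every $p$, so the two $L^2$-integrals are literally equal; this yields the isometry property at once, and, taking $\sigma' = g_0$ together with $\Phi(g_0) \in L^2(M,\bar{P}(n))$, it also shows that $\Phi(\sigma)$ lies in $L^2(M,\bar{P}(n))$. I expect the only genuinely technical point to be measurability: one must verify that $p \mapsto \psi_{i,p}(\sigma(p))$ is measurable into $\bar{P}(n)$ and that the patched map is measurable on $M$. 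This follows from the smoothness of the trivializing charts and the fact that measurability is local, hence insensitive to the null set left out by the $V_i$. The conceptual content of the statement is precisely that, although $\bar{E}$ may be a nontrivial bundle, it admits a measurable trivialization over a full-measure subset of $M$, so the bundle's twisting cannot be detected by the $L^2$-distance.
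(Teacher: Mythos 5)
Your proposal is correct and follows essentially the same route as the paper's proof: both fix a triangulation of $M$, trivialize $\bar{E}$ over the interiors $V_1,\ldots,V_k$ of the top-dimensional simplices via charts sending $\mu_{g_0}$ to the Euclidean volume form (so that the fiber metric becomes \eqref{eq-metric-P(n)}, independent of $p$), and patch the resulting fiberwise isometries into a map $\Phi\colon \bar{\mathcal{E}}_{L^2} \to L^2(M,\bar{P}(n))$ that preserves the $L^2$-integrals by construction. Your extra remarks on measurability and basepoint-independence merely make explicit details the paper leaves to the reader.
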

We recall the definition of $L^2(M, \bar{P}(n))$ and we refer to \cite{KS93}, §1.1 for more details. A function $f\colon M \to \bar{P}(n)$ is said to be square integrable if $\int_M d_{P(n)}^2(f(p), A)d\mu_{g_0}(p) < \infty$ for some (hence any) $A\in \bar{P}(n)$. As usual we define the equivalence relation $f\sim g$ if $f(p)=g(p)$ for $\mu_{g_0}$-a.e.$(p)$ and we denote by $L^2(M, \bar{P}(n))$ the quotient of the set of square integrable functions modulo this equivalence relation. The formula $d_{L^2}(f,g) := \int_M d_{P(n)}^2(f(p), g(p))d\mu_{g_0}(p)$ defines a distance on $L^2(M, \bar{P}(n))$. This is the distance we consider on $L^2(M, \bar{P}(n))$ in the statement of Theorem \ref{theo-isometry}.

\begin{proof}[Proof of Theorem \ref{theo-isometry}]
	We fix a triangulation of $M$ and we denote by $V_1,\ldots,V_k$ the interior of the maximal dimensional simplices. We can suppose that each $V_i$ is the support of a trivializing chart for the bundle $\bar{E}$ sending $\mu_{g_0}$ to the Euclidean volume form. In other words we have the following diagram 
		
	\begin{center}
		\begin{tikzcd} 
			\pi^{-1}(V_i) \arrow[r, "\varphi_i"] \arrow[d, "\pi"']
			& V_i \times \bar{P}(n) \arrow[r, "\pi_2"] & \bar{P}(n) \\ 
			V_i  
		\end{tikzcd}
	\end{center}
	where $\pi$ is the bundle map and $\pi_2$ is the projection on the second factor of the product.	Let $\sigma \in \bar{\mathcal{E}}_{L^2}$ and $i\in \lbrace 1,\ldots,k\rbrace$. On $V_i$ we use the trivialization above to define $f_{\sigma,i}\colon V_i \to \bar{P}(n) = \pi_2\circ \varphi_i \circ \sigma$. Now define $f_\sigma\colon M \to \bar{P}(n)$ as $f_\sigma(p) = f_{\sigma,i}(p)$ if $p\in V_i$ and $f_\sigma(p) = \text{Id} \in P(n)$ if $p\notin \bigcup_i V_i$. The choice of the value of $\Phi$ outside $\bigcup_i V_i$ does not matter because this set has measure $0$. By definitions of the distances on the spaces the map $\Phi \colon \bar{\mathcal{E}}_{L^2} \to L^2(M,\bar{P}(n))$, $\sigma \mapsto f_\sigma$ is an isometry.
\end{proof}
The immediate consequence is the 
\begin{proof}[Proof of Theorem \ref{theorem-tutti-uguali}]
	The space $L^2(M,\bar{P}(n))$ does not depend on $M$. Indeed the measure space $(M,\mathcal{B}, \mu_{g_0})$, where $\mathcal{B}$ is the Borel $\sigma$-algebra of $M$, is a standard probability space, i.e. it is isomorphic as measure space to the interval $[0,1]$ with the standard Lebesgue measure (cp. \cite{dLR93}, Theorem 4.3). It is easy to see that the definition of the space $L^2(M,\bar{P}(n))$ depends only on the measure space class of $M$, so it is isometric to $L^2([0,1],\bar{P}(n))$ and in particular it depends only on $n$.
\end{proof}

Finally we have the shorter proof of Clarke's Theorem, as promised.

\begin{theo}
	The space $(\bar{\mathcal{E}}_{L^2}, d_{L^2})$ is \textup{CAT}$(0)$ and complete. In particular it is the completion of $(\mathcal{E}_{C^\infty}, d_{L^2})$.
\end{theo}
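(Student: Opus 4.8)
The plan is to reduce everything, via the isometry of Theorem \ref{theo-isometry}, to two statements: that the target $(\bar P(n), d_{P(n)})$ is a complete CAT$(0)$ space, and that forming $L^2$-maps from a probability space into a complete CAT$(0)$ space preserves both completeness and the CAT$(0)$ property. Since $(\bar{\mathcal E}_{L^2}, d_{L^2})$ is isometric to $L^2(M,\bar P(n))$, combining these two facts will immediately give that it is complete and CAT$(0)$.

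The first and hardest step is to show that $\bar P(n)$ is CAT$(0)$. Here I would first rewrite the metric \eqref{eq-metric-P(n)} in the coordinates coming from the decomposition $H = (\det H)^{1/n} S$, where $S$ is unimodular and hence lies in the symmetric space $SP(n) = SL(n,\mathbb R)/SO(n)$ of unimodular positive-definite matrices. A direct computation should show that the determinant (volume) direction is orthogonal to the unimodular directions and that, after the radial substitution $r \propto (\det H)^{1/4}$, the metric becomes $dr^2 + r^2 h_{SP}$, with $h_{SP}$ a fixed positive multiple of the invariant metric on $SP(n)$. This identifies $(P(n),d_{P(n)})$ with the open Euclidean cone over $(SP(n),h_{SP})$, so that its completion $\bar P(n)$ is the full closed cone, the apex representing the degeneration of the volume to zero. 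I would then observe that $SP(n)$ is a Hadamard manifold and therefore CAT$(0)$, hence CAT$(1)$ (rescaling does not affect this), and realize the cone as the warped product $[0,\infty)\times_r (SP(n),h_{SP})$ with warping function $f(r)=r$; the warped product theorem of \cite{AB04}, of which Berestovskii's Euclidean cone theorem is the relevant special case, then yields that the cone is CAT$(0)$. Completeness of $\bar P(n)$ is automatic, being a metric completion. I expect the bookkeeping in the change of variables, and the verification that the completion adds exactly the apex, to be the only delicate points.

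For the second step I would prove, for an arbitrary complete CAT$(0)$ space $X$ and probability space $(\Omega,\mu)$, that $L^2(\Omega,X)$ is complete and CAT$(0)$ (this is essentially contained in \cite{KS93}). Completeness follows as in the scalar case, by extracting from a Cauchy sequence an almost-everywhere convergent subsequence and invoking the completeness of $X$. For the CAT$(0)$ property I would use the Bruhat--Tits characterization: given $f,g\in L^2(\Omega,X)$, their pointwise midpoints $p\mapsto m(f(p),g(p))$ are well defined and measurable because $X$ is uniquely geodesic, they lie in $L^2(\Omega,X)$, and integrating over $\Omega$ the inequality $d(z,m)^2 \le \tfrac12 d(z,x)^2 + \tfrac12 d(z,y)^2 - \tfrac14 d(x,y)^2$, valid $\mu$-almost everywhere in $X$, delivers the same inequality for $d_{L^2}$; this characterizes complete CAT$(0)$ spaces. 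Applying this with $X=\bar P(n)$ and $\Omega = M$ then shows that $L^2(M,\bar P(n))$, and therefore $(\bar{\mathcal E}_{L^2}, d_{L^2})$, is complete and CAT$(0)$.

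Finally, to obtain the last assertion I would invoke Theorem \ref{theo-inclusions}, which provides the density of $\mathcal E_{C^\infty}$ in $\bar{\mathcal E}_{L^2}$: a complete metric space in which a subset is dense is the metric completion of that subset, so $(\bar{\mathcal E}_{L^2}, d_{L^2})$ is the completion of $(\mathcal E_{C^\infty}, d_{L^2})$.
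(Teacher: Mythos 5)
Your proposal is correct and takes essentially the same route as the paper: reduce via the isometry of Theorem \ref{theo-isometry} and the Korevaar--Schoen fact (\cite{KS93}) that $L^2$-maps into a complete CAT$(0)$ space form a complete CAT$(0)$ space, then establish the CAT$(0)$ property of $\bar{P}(n)$ by a warped-product argument based on \cite{AB04}. Your explicit cone description $dr^2 + r^2 h_{SP}$ with $r \propto (\det H)^{1/4}$ is precisely the paper's warped product $d_0 \times_{e^{\frac{t\sqrt{n}}{2}}} e^{\frac{t\sqrt{n}}{2}}dt$ on $P_1(n) \times [-\infty,+\infty)$ after reparametrizing the ray by arclength, so invoking Berestovskii's cone theorem instead of \cite{AB04}, Theorem 1.1 directly is only a cosmetic difference (and your spelled-out midpoint argument for $L^2(\Omega,X)$ is what the paper delegates to its citations).
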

\begin{proof}
	By Theorem \ref{theo-isometry} we know that $\bar{\mathcal{E}}_{L^2}$ is isometric to $L^2(M,\bar{P}(n))$. Therefore it is enough to show that the completion $(\bar{P}(n), d_{P(n)})$ of $(P(n), d_{P(n)})$ is CAT$(0)$ by \cite{KS93}, page 615 or \cite{GN21}, §4.1.1.
	There is another symmetric, complete, nonpositively curved metric $d_0$ on $P(n)$ induced by the Riemannian product
	$$\langle A,B \rangle_H = \text{tr}(H^{-1}AH^{-1}B),$$
	where $H\in P(n)$ and $A,B \in T_HP(n)$. In other words the space $(P(n), d_0)$ is CAT$(0)$. The metric $d_{P(n)}$ is obtained by a conformal change of the metric $d_0$ with conformal factor $H \mapsto \varphi(H) = \sqrt{\det H}$.
	We denote by $P_1(n)$ the subset of $P(n)$ of matrices with determinant $1$: it is a convex subset (cp. \cite{BH09}, Lemma II.10.52). Moreover the map $P_1(n) \times \mathbb{R} \to P(n)$, $(H,t) \mapsto e^{\frac{t}{\sqrt{n}}}H$ is an isometry by \cite{BH09}, Proposition II.10.53. If $H$ is seen as a couple $(H_1,t)$ through this identification then clearly $\varphi$ depends only on $t$: $\varphi(H_1,t) = e^{\frac{t\sqrt{n}}{2}}$. Therefore the metric $d_{P(n)}$ can be seen as the warped product metric $d_0 \times_{e^{\frac{t\sqrt{n}}{2}}} e^{\frac{t\sqrt{n}}{2}}dt$ on $P_1(n) \times \mathbb{R}$.
	The metric space $(-\infty, +\infty)$ with the metric $e^{\frac{t\sqrt{n}}{2}}dt$ is a ray, whose completion is the space $[-\infty, +\infty)$ with the metric $e^{\frac{t\sqrt{n}}{2}}dt$	obtained extending the function $e^{\frac{t\sqrt{n}}{2}}$ by sending $-\infty$ to $0$. Therefore we can construct the new, complete warped product metric space $(P_1(n) \times [-\infty, +\infty), d_0 \times_{e^{\frac{t\sqrt{n}}{2}}} e^{\frac{t\sqrt{n}}{2}}dt)$. The natural isometric embedding of $(P(n) \times \mathbb{R}, d_0 \times_{e^{\frac{t\sqrt{n}}{2}}} e^{\frac{t\sqrt{n}}{2}}dt)$ inside the space above has dense image, so $(P_1(n) \times [-\infty, +\infty), d_0 \times_{e^{\frac{t\sqrt{n}}{2}}} e^{\frac{t\sqrt{n}}{2}}dt)$ is the completion $\bar{P}(n)$ of $(P(n),d_{P(n)})$ and it is CAT$(0)$ because of \cite{AB04}, Theorem 1.1.	
\end{proof}

The explicit description of the completion of the space of Riemannian metrics with respect to the $L^2$-distance given in Theorem \ref{theo-isometry} as $L^2(M,\bar{P}(n))$ tells us that its isometry group is larger than expected. Indeed not only it contains the group of volume-preserving diffeomorphisms of any compact, orientable, $n$-dimensional smooth manifold, but also the much larger group of all automorphisms of the standard probability space $[0,1]$ endowed with the Lebesgue measure. It could be interesting to determine explicitly the whole isometry group of this space. An additional question could be to find the description of the boundary at infinity of $L^2(M,\bar{P}(n))$.

\vspace{10mm}

\small
\noindent {\sc Acknowledgments.} {\em I thank A. Lytchak for introducing me to the problem studied in this paper and for teaching me the general philosophy behind this kind of questions.}
\normalsize

\bibliographystyle{alpha}
\bibliography{space_riemannian_metrics}

\end{document}